\tikzstyle{dot}=[fill=black, draw=black, shape=circle]
\tikzstyle{smalldot}=[fill=black, draw=black, shape=circle, minimum size=1mm]
\tikzstyle{dashed}=[-, thick, dashed]
\tikzstyle{invisfillblue}=[-, draw=none, fill={rgb,255: red,78; green,158; blue,255}]
\tikzstyle{invisfillred1}=[-, draw=none, fill={rgb,255: red,255; green,47; blue,47}]
\tikzstyle{invisfillred2}=[-, draw=none, fill={rgb,255: red,255; green,104; blue,58}]
\tikzstyle{invisfillgreen}=[-, draw=none, fill={rgb,255: red,70; green,204; blue,86}]
\tikzstyle{invisfillyellow}=[-, draw=none, fill=yellow]
\tikzstyle{arrowline}=[<->, thick]
\newtheorem{theorem}{Theorem}
\newtheorem{lemma}[theorem]{Lemma}
\title{Graphs of low average degree without independent transversals}
\thanks{The first author was supported by the European Research Council Horizon 2020 project CRACKNP (grant agreement no. 853234). The second author was supported by project GA20-09525S of the Czech Science Foundation.}
\author{Carla Groenland}
\address{Utrecht University, The Netherlands}
\email{c.e.groenland@uu.nl}
\author{Tom\'{a}\v{s} Kaiser}
\address{University of West Bohemia, Pilsen, Czech Republic}
\email{kaisert@kma.zcu.cz}
\author{Oscar Treffers}
\address{Radboud University Nijmegen, The Netherlands}
\email{}
\author{Matthew Wales}
\address{DPMMS, University of Cambridge, UK}
\email{mw637@cam.ac.uk}
\date{\today}
\newcommand{\N}{\mathbb{N}}
\newtheorem{problem}[theorem]{Problem}
\newtheorem{remark}[theorem]{Remark}
\newcommand\Setx[1]{\{#1\}}
\newcommand\size[1]{|#1|}
\newcommand\PP {\mathcal P}
\newcommand\eps {\varepsilon}
\begin{document}

\maketitle

\begin{abstract}
    An independent transversal of a graph $G$ with a vertex partition $\mathcal P$ is an independent set of $G$ intersecting each block of $\mathcal P$ in a single vertex. Wanless and Wood proved that if each block of $\mathcal P$ has size at least $t$ and the average degree of vertices in each block is at most $t/4$, then an independent transversal of $\mathcal P$ exists. We present a construction showing that this result is optimal: for any $\varepsilon > 0$ and sufficiently large $t$, there is a forest with a vertex partition into parts of size at least $t$ such that the average degree of vertices in each block is at most $(\frac14+\eps)t$, and there is no independent transversal. This unexpectedly shows that methods related to entropy compression such as the Rosenfeld-Wanless-Wood scheme or the Local Cut Lemma are tight for this problem. Further constructions are given for variants of the problem, including the hypergraph version. 
\end{abstract}

\section{Introduction}
\label{sec:introduction}

Graphs in this paper contain no parallel edges or loops. Let $G$ be a
graph and let $\PP$ be a partition of its vertex set $V(G)$ (also
called a \emph{vertex partition} of $G$). The sets in $\PP$ are called
the \emph{blocks} of $\PP$. Following~\cite{F90}, we say that $\PP$ is
\emph{$t$-thick} if the size of each block of $\PP$ is at least
$t$. An \emph{independent transversal} of $\PP$ in $G$ is an
independent set in $G$ intersecting each block of $\PP$ in a single
vertex.

Conditions for the existence of independent transversals in $G$ in
relation to the maximum degree $\Delta(G)$ of $G$ date back to a 1972
conjecture of Erd\H{o}s~\cite{E72}: if $\PP$ is $t$-thick and
$\Delta(G)\leq t-1$, then there is an independent transversal of $\PP$
in $G$. The conjecture was disproved by Seymour
(cf.~\cite{BES75}). 

Bollob\'{a}s, Erd\H{o}s and Szemer\'{e}di~\cite{BES75} constructed graphs $G$ with a $t$-thick
partition $\PP$ with $n$ blocks (for any $n\geq 3$) such that $\Delta(G) = \frac t2 + \frac t{n-2}$ and $\PP$
has no independent transversal in $G$. They conjectured that for any
$\varepsilon > 0$ and large enough $\size\PP$, $\Delta(G) \leq (\frac12-\eps)t$
ensures the existence of an independent transversal of a $t$-thick
partition $\PP$.

Alon~\cite{A88} and Fellows~\cite{F90} independently proved that there
is a constant $\alpha > 0$ such that any $t$-thick vertex partition of a
graph $G$ with $\Delta(G) \leq \alpha t$ admits an independent
transversal (with $\alpha=\frac1{25}$ and $\alpha=\frac1{16}$, respectively). Both
proofs are based on the Lov\'{a}sz Local Lemma. An improvement to
$\alpha=\frac1{2e}$ using the same method was proved by Alon~\cite{A94}
(see also~\cite[Chapter 5.5]{AS15}).

Constructions in~\cite{J92,Y97} show that for any $d$ which is a power
of two, there is a graph $G$ with a $(2d-1)$-thick partition $\PP$
such that $\Delta(G) = d$ and $\PP$ has no independent
transversal. (This was later extended to every $d$ by Szab\'{o} and
Tardos~\cite{ST06}.) Consequently, the above constant $\alpha$ must be
at least $\frac12$.

Haxell~\cite{H01} finally proved the optimal sufficient condition for
the existence of an independent transversal in terms of maximum degree.
\begin{theorem}[\cite{H01}, Theorem 2]
\label{t:haxell}
  Let $\PP$ be a $t$-thick vertex partition of a graph $G$ with
  $\Delta(G) \leq \frac t2$. Then $\PP$ has an independent transversal in
  $G$.
\end{theorem}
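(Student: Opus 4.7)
The plan is to argue by contradiction via an extremal-counterexample approach, following the strategy that Haxell's original proof employs. Suppose $\PP$ has no independent transversal in $G$. Among all independent sets $I\subseteq V(G)$ that meet each block of $\PP$ in at most one vertex, choose $I$ so that the set $\mathcal{J}(I)\subseteq\PP$ of blocks met by $I$ is maximum. By assumption $\mathcal{J}(I)\neq\PP$, so some block $B_0$ is missed; by maximality every $v\in B_0$ has at least one neighbour in $I$, for otherwise $I\cup\{v\}$ would be a strictly better partial transversal.

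The central structural step is to refine the choice of $I$ by a secondary extremal condition: among all maximum partial transversals, take one whose \emph{dominator set} $D:=N(B_0)\cap I$ has smallest possible size. I would then show that every $w\in D$ admits a private witness $v_w\in B_0$ with $N(v_w)\cap I=\{w\}$. The reason is that if some $w\in D$ had no private witness, every neighbour of $w$ in $B_0$ would already be covered by another vertex of $I$, and a single-vertex swap inside the block $B_w$ of $w$ could be performed to reduce $|D|$ without lowering the block count: since $|B_w|\ge t$ and each vertex of $I$ (in particular the at most $t/2$ vertices of $I\setminus\{w\}$ that can interfere) has degree at most $t/2$, there is room to locate a replacement $w'\in B_w$ that is independent from $I\setminus\{w\}$ and that avoids being adjacent to $B_0$. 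The resulting private-witness assignment $w\mapsto v_w$ is an injection $D\hookrightarrow B_0$.

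To close the argument I would iterate the single-vertex swap $I\mapsto(I\setminus\{w_0\})\cup\{v_{w_0}\}$ for some $w_0\in D$. The new set is again an independent partial transversal of the same size, but now $B_{w_0}$ plays the role of the uncovered block. Re-running the witness argument relative to $B_{w_0}$ grows an alternating tree of swap options. At every internal node of that tree the branching is simultaneously upper-bounded by the available degree (at most $t/2$ out-edges per vertex of $I$ to any neighbouring block) and lower-bounded by the need to cover a block of size at least $t$; this mismatch forces the tree either to yield a genuine augmentation (contradicting the maximality of $\mathcal{J}(I)$) or to close into a short cycle whose edges violate the degree bound by a direct double-count across the tree.

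The main obstacle is controlling this iteration so that it actually terminates in a contradiction rather than looping: without the correct secondary extremal condition the sequence of swaps could cycle indefinitely without producing either an augmentation or a quantifiable violation. It is exactly here that the bound $\Delta(G)\le t/2$ is used sharply, balancing the block size $t$ against the maximum degree $t/2$; that the two quantities must match in this way is also what makes Theorem~\ref{t:haxell} tight, as witnessed by the Szab\'{o}--Tardos construction mentioned above.
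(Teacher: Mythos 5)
The paper does not prove Theorem~\ref{t:haxell}: it is quoted from Haxell's paper \cite{H01} as background, so there is no in-paper argument to compare against. Judged on its own, your proposal correctly identifies the skeleton of Haxell's proof (maximum partial transversal, domination of the missed block, vertex swaps, and a final count pitting the block size $t$ against the degree bound $t/2$), but it is a sketch with two genuine gaps rather than a proof.

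First, the private-witness step is not justified. To rule out a dominator $w\in D$ without a private witness you propose to swap $w$ for some $w'\in B_w$ that is independent of $I\setminus\{w\}$ and non-adjacent to $B_0$, arguing that ``the at most $t/2$ vertices of $I\setminus\{w\}$ that can interfere'' leave room inside $B_w$. But $|I|$ is one less than the number of blocks and can be arbitrarily large compared with $t$; what is bounded by $t/2$ is the degree of each individual vertex, not the number of vertices of $I$ having a neighbour in $B_w$, nor the number of vertices of $B_w$ with a neighbour in $B_0$. As stated, the replacement $w'$ need not exist, so the injection $D\hookrightarrow B_0$ is not established. Second, and more seriously, the step you yourself flag as ``the main obstacle'' --- forcing the alternating-tree iteration to terminate in a contradiction rather than cycle --- is precisely the content of Haxell's proof, and you leave it unresolved. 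The standard resolution is to run the process so that each failed augmentation strictly enlarges the set of blocks it touches (an induction on the number of blocks then guarantees termination), producing a set $S$ of blocks together with an independent set $W\subseteq\bigcup S$ with $|W|=|S|-1$ that dominates $\bigl(\bigcup S\bigr)\setminus W$. The count is then concrete: $W$ must receive at least $t|S|-(|S|-1)$ edges from $\bigcup S$, while $\Delta(G)\le t/2$ permits at most $(|S|-1)\,t/2$, which is smaller for every $t\ge 2$. Without exhibiting this structure, the ``direct double-count across the tree'' you invoke is not actually performed, so the proposal does not yet constitute a proof.
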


There is a close connection between independent transversals in a
graph and topological properties of its `independence complex'. These
connections were first uncovered in~\cite{AH00} and further developed
in e.g.~\cite{AB06,ABZ07,M01,M03}. Another closely related notion is
that of strong colouring of graphs which was introduced
in~\cite{A88,F90} (see~\cite{H08} and the references therein).

Reed and Wood~\cite{RW12} observed that the Lov\'{a}sz Local Lemma
implies a version of Theorem~\ref{t:haxell} in terms of average degree
rather than maximum degree. Given a graph $G$ and a partition $\PP$ of
$V(G)$, let us say that the degree $d(B)$ of a block $B\in\PP$ is the
number of edges with (precisely) one endvertex in $B$. Furthermore,
the \emph{maximum block average degree} of $\PP$ of $V(G)$ is the
maximum, over all blocks $B$, of $d(B)/\size B$. Reed and Wood's
result then states that if $\PP$ is a $t$-thick partition of $V(G)$
with maximum block average degree at most $\frac t{2e}$, then $\PP$ has an
independent transversal in $G$.

In~\cite{DEKO21}, an equivalent problem related to the single-conflict
chromatic number is considered, and it is noted that a stronger
sufficient condition with $\frac t{2e}$ replaced by $\frac t4$ follows from the
Local Cut Lemma~\cite{B17}. 

Another approach which yields the same improvement is the one of
Wanless and Wood~\cite{WW20}, who apply an idea of
Rosenfeld~\cite{R20} to a number of problems related to hypergraph
colouring, obtaining simpler proofs and sometimes stronger results
compared to earlier applications of the Lov\'{a}sz Local Lemma. Their
result on independent transversals is stated in a more general form
for hypergraphs. Let us restrict our attention to the graph case
which reads as follows.

\begin{theorem}[\cite{WW20}, Theorem 9]\label{t:ww-graphs}
  If $G$ is a graph and $\PP$ a $t$-thick partition of $V(G)$ with
  maximum block average degree at most $\frac t4$, then there exist
  $(\frac t2)^{\size\PP}$ independent transversals of $\PP$ in $G$.
\end{theorem}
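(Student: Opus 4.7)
The plan is to prove by induction on $|S|$ the following strengthening of the theorem: for every $S \subseteq \PP$ and every $B \in S$,
\[
N(S) \;\geq\; \tfrac{t}{2}\, N(S \setminus \{B\}),
\]
where $N(T)$ denotes the number of independent transversals of the partition consisting of the blocks in $T$. Telescoping from $T = \PP$ down to $T = \emptyset$, and using $N(\emptyset) = 1$, then yields $N(\PP) \geq (t/2)^{|\PP|}$. The cases $|S| \leq 1$ are immediate, since $N(\{B\}) = |B| \geq t$.

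For the inductive step, fix $S$ with $|S| \geq 2$ and $B \in S$, set $S' = S \setminus \{B\}$, and let $V' = \bigcup_{B' \in S'} B'$. For $v \in B$, let $N_v(S)$ denote the number of independent transversals of $S$ containing $v$; for $u \in V'$, let $M_u(S')$ denote the number of independent transversals of $S'$ containing $u$. Since an independent transversal of $S$ containing $v$ is precisely an independent transversal of $S'$ disjoint from $N(v)$, the union bound gives
\[
N_v(S) \;\geq\; N(S') - \sum_{u \in N(v) \cap V'} M_u(S').
\]
Summing over $v \in B$ and reindexing the double sum by $u$, with $d_B(u) := |N(u) \cap B|$, yields
\[
N(S) \;\geq\; |B|\, N(S') \;-\; \sum_{u \in V'} d_B(u)\, M_u(S').
\]

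The key step is to bound $M_u(S')$ using the inductive hypothesis. For $u$ in some block $B' \in S'$, any independent transversal of $S'$ containing $u$ restricts to an independent transversal of $S' \setminus \{B'\}$, so $M_u(S') \leq N(S' \setminus \{B'\})$; the inductive hypothesis applied to $(S', B')$ then gives $M_u(S') \leq (2/t)\, N(S')$. Combined with $\sum_{u \in V'} d_B(u) \leq d(B) \leq (t/4)|B|$ (the first inequality because each edge with one endpoint in $B$ and the other in $V'$ is counted at most once, the second from the average-degree hypothesis on $B$),
\[
\sum_{u \in V'} d_B(u)\, M_u(S') \;\leq\; \tfrac{2\, d(B)}{t}\, N(S') \;\leq\; \tfrac{|B|}{2}\, N(S').
\]
Substituting gives $N(S) \geq |B|\, N(S') - \tfrac{|B|}{2}\, N(S') = \tfrac{|B|}{2}\, N(S') \geq \tfrac{t}{2}\, N(S')$, completing the induction.

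The main obstacle is identifying the correct inductive statement: a direct induction on $N(\PP) \geq (t/2)^{|\PP|}$ does not close, because bounding $M_u(S')$ requires a lower bound on the per-block contraction ratio $N(S')/N(S' \setminus \{B'\})$ --- precisely the statement one is trying to prove. Once this per-block contraction is adopted as the inductive hypothesis, which is the characteristic Rosenfeld-style move, the average-degree bound $d(B) \leq (t/4)|B|$ together with $|B| \geq t$ combine exactly to deliver the required factor $\tfrac{t}{2}$, with no slack to spare.
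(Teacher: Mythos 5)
Your argument is correct, and it is precisely the Rosenfeld--Wanless--Wood counting scheme used in the cited source \cite{WW20} (the paper itself only quotes this theorem): induct on subsets $S$ of blocks with the per-block contraction $N(S)\geq \frac t2 N(S\setminus\{B\})$ as the hypothesis, bound the bad extensions via $M_u(S')\leq N(S'\setminus\{B'\})\leq \frac2t N(S')$, and use $d(B)\leq \frac t4|B|$ together with $|B|\geq t$ to close the induction. No gaps; the base case, the reindexing of the double sum, and the restriction to edges leaving $B$ are all handled correctly.
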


It is natural to ask if the factor $\frac 14$ can be improved, perhaps even
to $\frac 12$ so as to obtain a genuine strengthening of
Theorem~\ref{t:haxell} in terms of maximum block average degree. This
is mentioned as an open problem in~\cite{WW20} as well as
in~\cite{KK20}.

The main result of this paper answers this question, showing that the
bound in Theorem~\ref{t:ww-graphs} cannot be improved. This has the
unexpected consequence that methods related to entropy compression such as the Rosenfeld--Wanless--Wood scheme or the Local Cut Lemma perform optimally for the given problem. To the
best of our knowledge, this is the first non-trivial instance of a
problem where this is known to be the case.

\begin{theorem}\label{t:main}
  For all $\eps>0$ and sufficiently large $t$, there exists a forest $F$
  and a $t$-thick vertex partition with maximum block average degree at
  most $(\frac14+\eps)t$ and no independent transversal in $F$.
\end{theorem}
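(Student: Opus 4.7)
The plan is to construct $F$ by an iterative padding procedure, starting from a simple gadget that has no independent transversal but suffers from high block average degree, and progressively lowering that degree below $(\tfrac{1}{4}+\eps)t$ without reintroducing a transversal.

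The starting point is a \emph{star gadget} $F_0$: the disjoint union of $t$ stars $K_{1,t}$, with the $t$ star centres forming one block $B_0$ of size $t$, and the leaves of the $i$-th star forming a separate block $L_i$ of size $t$ whose vertices are all adjacent to the centre $c_i$. This forest has no independent transversal, since any choice $\sigma(B_0)=c_i$ blocks every vertex of $L_i$. However the root block $B_0$ satisfies $d(B_0)/\size{B_0}=t$, which is far above the target.

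To drive this average degree down, I would iteratively enlarge $B_0$ by adjoining auxiliary \emph{padding} vertices of low degree. A padding vertex $p$ of degree $0$ would immediately yield a transversal (just select $p$ in $B_0$), so each padding must carry at least one edge pointing into a block whose choice is structurally pinned down, so that selecting $p$ forces a conflict. To create such a constrained block, the plan is to attach $p$ by a single edge to a fresh copy of the same gadget at a smaller scale: the small copy's root has a designated vertex (coinciding with $p$'s neighbour) that is forced, because every alternative in its block is itself blocked by a deeper sub-copy. Iterating this padding procedure for $O(1/\eps)$ steps, with roughly $3t$ paddings adjoined per $t$ hubs at each step, the size of the root block grows by a factor of about $4$ per step while its number of incident edges grows by only a factor of about $1$; consequently its block average degree shrinks by a factor close to $4$ per step and eventually drops below $(\tfrac{1}{4}+\eps)t$. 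All auxiliary blocks introduced along the way can be padded out with isolated companion vertices to ensure every block has size at least $t$ while contributing only trivially to block degrees.

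The main obstacle is to simultaneously guarantee the two key properties: (i)~that the final graph is a forest, and (ii)~that no independent transversal exists. Acyclicity is maintained because each padding is attached to a \emph{fresh}, disjoint copy of the recursive sub-gadget, so gluings never close a cycle. The harder part is property (ii): one must check, inductively through the levels, that whenever $\sigma$ selects a padding vertex in the root block, the attached sub-gadget really does force a conflict rather than admitting an alternative transversal. This forcing step is the crux of the argument and is what pins the limiting constant down to $\tfrac{1}{4}$: below this threshold, the forest edge budget (essentially one edge per vertex) and the coverage-of-all-transversals requirement cannot both be satisfied, whereas at this threshold they can be simultaneously satisfied only by a delicately balanced recursive construction of the kind described.
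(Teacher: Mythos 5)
Your overall strategy---recursively building a forest in which ``forbidden'' vertices are created by joining them to ``forced'' sets in deeper sub-gadgets, until an entire block is forbidden---is the right one, and it is the skeleton of the paper's construction. But the quantitative scheme you propose does not work, and the gap is exactly where the constant $\tfrac14$ has to be earned. Your padding vertices have degree $1$, attached to a single vertex $v$ of a sub-gadget that is supposed to be forced. For $v$ alone to be forced, \emph{all} of the other $\ge t-1$ vertices of $v$'s block must be forbidden; but then either those $t-1$ vertices each have degree close to $t$ (making that block's average degree close to $t$, not $(\tfrac14+\eps)t$), or they are themselves forbidden via smaller forced sets deeper down---and if you try to fix the average degree of the auxiliary block by ``padding it out with isolated companion vertices,'' those isolated vertices are selectable, so the forced set is no longer the singleton $\{v\}$ but contains all the companions, and your degree-$1$ padding vertex is no longer forbidden. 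The forcing requirement and the degree budget fight each other at every level, and a ``shrink the average degree by a factor of $4$ per step'' argument never confronts this. Relatedly, your explanation of why $\tfrac14$ is the threshold (a forest has one edge per vertex) is not the right mechanism: the blocks in a correct construction carry about $t^2/4$ incident edges.

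The missing idea is to track, for each block, the number $n_j$ of heavy (forbidden) vertices versus the $t-n_j$ light vertices that constitute its forced set. A heavy vertex one level up must be joined to all $t-n_j$ light vertices of a private deeper block, so a block with $n_{j+1}$ heavy vertices has degree about $n_{j+1}(t-n_j)$, and the average-degree constraint forces $n_{j+1}\le \frac{(1/4+\eps)t}{1-n_j/t}$. One must then show this sequence can climb from $n_1=0$ all the way to $n_k=t$ (at which point the whole block is forbidden and no transversal exists). This is possible precisely because $x(1-x)\le\tfrac14$ with equality only at $x=\tfrac12$: the map $x\mapsto \frac{1/4+\eps}{1-x}$ has no fixed point in $[0,1)$ once $\eps>0$, so the sequence squeezes past the bottleneck at $n_j\approx t/2$ (taking $O(1/\eps)$ or, done crudely, $t$ steps), whereas at $\eps=0$ it stalls at $t/2$. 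Without this recursion and the verification that it terminates at $n_k=t$, the construction is not complete; with it, your sketch essentially becomes the paper's proof.
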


Thus, if we let $\eps$ decrease to $0$, there is a sharp transition: while maximum block average degree of $(\frac 14+\eps)t$ does not imply an independent transversal for arbitrarily small nonzero $\eps$,  exponentially many are guaranteed at $\eps=0$ by Theorem~\ref{t:ww-graphs}. 

In Section~\ref{sec:forests_simple} we prove Theorem \ref{t:main}.
In Section~\ref{sec:max-degree} we show that the average degree bound of Theorem~\ref{t:main} can be
combined with an upper bound on the maximum degree of the graph.
Section~\ref{sec:hypergraphs} discusses an extension of Theorem~\ref{t:main} to
hypergraphs, which again demonstrates the optimality of the general bound
obtained in~\cite{WW20}. We finish with various open problems in Section~\ref{sec:concl}.

\section{Low average degree forests}
\label{sec:forests_simple}
In this section, we prove Theorem \ref{t:main}.
Let $G$ be a graph with a vertex partition $\PP$.
We say a vertex of $G$ is \emph{forbidden} if it cannot appear in an independent transversal of $\PP$. If we can find a block of $\PP$ in which every vertex is forbidden, this means no independent transversal exists.
We say a subset $S$ of $V(G)$ is \emph{forced} if any independent transversal contains (at least one) vertex from $S$. As an example, if $B\in\PP$ and all vertices in $A\subseteq B$ are forbidden, then $B\setminus A$ is forced.

Any vertex $v$ which is joined entirely to a forced subset $S$ is forbidden: an independent transversal must contain a vertex from $S$, and $v$ is adjacent to every vertex in $S$, so a transversal containing $v$ cannot be independent.

A subset $B\in\PP$ is always forced. Suppose that a vertex $v\in B'\in \PP$ is joined entirely to some block $B$. Then $v$ is forbidden and therefore $B'\setminus \{v\}$ is forced and of strictly smaller size. The idea of our construction is to iterate this phenomenon. 

\subsection{The construction}
\label{subsec:forests_constr}
Let $0=n_1\leq \dots\leq n_k=t$ be a given sequence of integers for some $t>0$. 
We inductively define forests $F_1,F_2,\dots,F_k$ which consist of blocks of several `grades'; these blocks define the desired vertex partition. Each block will consist of some number of \textit{light} vertices of degree at most 1, and some number of \textit{heavy} vertices of high degree. The construction is illustrated in Figure \ref{fig:1}.
\begin{itemize}
    \item Let $F_1$ be an independent set of size $t$. The vertex partition consists of a single block of grade 1 (consisting entirely of light vertices).
    \item Let $F_2$ consist of $n_2$ copies of $F_1$ plus $t$ vertices in a single block of grade 2. The vertices in the block of grade 2 consist of $t-n_2$ light vertices and $n_2$ heavy vertices. Each heavy vertex is fully connected to a private copy of $F_1$.
\item Having defined $F_j$ for some $2\leq j<k$, we define $F_{j+1}$ as follows. 
    Let $F_{j+1}$ consist of $n_{j+1}$ copies of $F_j$ and a single block of grade $j+1$, inducing an independent set of size $t$. The vertices in the block of grade $j+1$ consist of $t-n_{j+1}$ light vertices and $n_{j+1}$ heavy vertices, which are each fully adjacent to the $t-n_j$ light vertices in the unique block of grade $j$ in a private copy of $F_j$. This defines a unique block of grade $j+1$ in $F_{j+1}$, and it inherits the partition into blocks from the copies of $F_j$ for the remaining vertices. 
\end{itemize}

Within each forest $F_i$, every heavy vertex is forbidden. This can be proven by induction on the grade $j\in \{2,\dotsc,i\}$; each heavy vertex of grade 2 is joined to an entire block of grade 1, so is forbidden. If the heavy vertices of grade $j$ are forbidden, this means the light vertices of a grade $j$ block form a forced set, and so each heavy vertex of grade $j+1$ is forbidden. In a block of grade $k$ within $F_k$ all vertices are heavy, and so no independent transversal exists.
\begin{figure}
    \centering
    \begingroup
        \tikzset{every picture/.style={scale=0.7}}
        \input{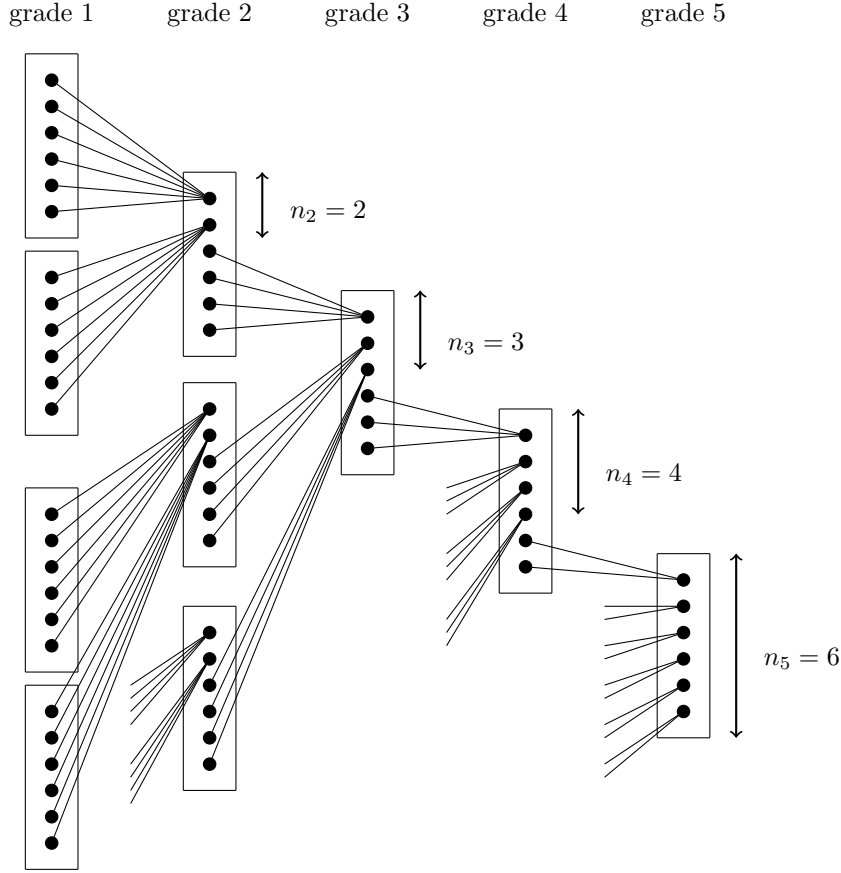}
    \endgroup
    \caption{The construction of Section \ref{sec:forests_simple} is illustrated. The square boxes denote entire blocks of that grade.}
    \label{fig:1}
\end{figure}
\subsection{Analysis}
To prove Theorem \ref{t:main}, it remains to show that there is a sequence of integers $(n_i)_{i=1}^k$ such that the construction of the previous subsection has low block average degree. This is shown in the following lemma.
\begin{lemma}
\label{lem:n}
For every $\varepsilon>0$, there exists $t_0\in \N$ such that for all $t\geq t_0$, there is a sequence of integers $0=n_1< \dots < n_k=t$ such that for all $j\in \{1,\dots,k-1\}$
\begin{equation}
    \label{eq:averagedegree}
\frac1t(n_{j+1}\cdot (t-n_{j})+(t-n_{j+1})\cdot 1)\leq \left(\frac{1}{4}+\varepsilon\right)t.
\end{equation}
\end{lemma}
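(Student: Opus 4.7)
My plan is to renormalise by setting $x_j := n_j/t$. Dividing \eqref{eq:averagedegree} by $t$, the desired inequality becomes
\[
x_{j+1}(1-x_j) + \tfrac{1}{t}(1-x_{j+1}) \;\leq\; \tfrac{1}{4}+\varepsilon,
\]
whose second summand is at most $1/t$. It therefore suffices to produce a strictly increasing real sequence $0 = x_1 < x_2 < \dots < x_k = 1$ with $x_{j+1}(1-x_j) \leq \tfrac14 + \tfrac{\varepsilon}{2}$ for every $j<k$, and then set $n_j := \lfloor x_j t\rfloor$ (with $n_1 = 0$ and $n_k = t$): for $t$ large, the rounding errors and the $1/t$ correction together fit inside the remaining $\varepsilon/2$ slack.

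For the existence of such a sequence, iterate $f(x) = (\tfrac14 + \tfrac{\varepsilon}{2})/(1-x)$ from $x_1 = 0$, capping at $1$ as soon as an iterate would exceed $1$. Since $x(1-x) \leq \tfrac14 < \tfrac14 + \tfrac{\varepsilon}{2}$ on $[0,1)$, the map $f$ has no fixed point in $[0,1)$, so its iterates from $0$ are strictly increasing and cannot remain bounded in $[0,1)$, since otherwise they would converge to such a fixed point. Hence the iteration reaches $[\tfrac34 - \tfrac{\varepsilon}{2}, 1)$ in a finite number of steps $k = k(\varepsilon)$ independent of $t$, at which point we close off by declaring the next term equal to $1$. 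By construction $x_{j+1}(1-x_j) = \tfrac14 + \tfrac{\varepsilon}{2}$ at every intermediate step, while at the final step $x_k(1 - x_{k-1}) = 1 - x_{k-1} \leq \tfrac14 + \tfrac{\varepsilon}{2}$.

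Finally, since $k$ and the gaps $x_{j+1} - x_j$ depend only on $\varepsilon$, for $t \geq t_0(\varepsilon)$ the $n_j = \lfloor x_j t\rfloor$ form a strictly increasing integer sequence, and \eqref{eq:averagedegree} follows from the real version with an $O(1/t)$ perturbation absorbed into the $\varepsilon/2$ slack. I do not foresee any serious obstacle; the only slightly subtle point is the fixed-point-free argument that guarantees termination of the recursion, which works precisely because the $\varepsilon/2$ slack pushes the target strictly above the global maximum $\tfrac14$ of $x(1-x)$ on $[0,1]$.
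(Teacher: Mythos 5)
Your proposal is correct and follows essentially the same route as the paper's proof: both iterate the map $x\mapsto (\tfrac14+\delta)/(1-x)$ with a slack $\delta<\eps$ reserved to absorb the $+1$ term and the rounding, and both deduce termination from the fact that $x(1-x)\le\tfrac14$ on $[0,1)$ forces the iterates to increase strictly. The only presentational difference is that you build a $t$-independent real sequence first and discretise at the end, whereas the paper runs the recursion directly on integers; your fixed-point/compactness termination argument could even be made quantitative by noting $f(x)-x\ge\eps/2$ on $[0,1)$.
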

\begin{proof}
Let $\varepsilon>0$ be given.
Let $\delta>0$ and $t_1\in \N$ be such that
\begin{equation}
    \label{eq:delta_to_eps}
\left(\frac{1}{4}+\delta\right) t+1\leq \left(\frac{1}{4}+ \varepsilon\right)t
\end{equation}
for all $t\geq t_1$. 
Let $t_0\geq t_1$ such that $\frac{\delta t}{2} > 1$ for all $t\geq t_0$. 

Let $t\geq t_0$ be given. We set $n_1 = 0$. For $j\geq 1$, while $n_{j}<\frac34 t$, we set $n_{j+1}$ to be the minimum of $\lfloor \frac{1/4 +\delta}{1-n_j/t}t\rfloor$ and $t$. If $n_j=t$ we set $k = j$ and abort the procedure.
It remains to show (\ref{eq:averagedegree}) holds and that the procedure terminates.

We first show (\ref{eq:averagedegree}). 
For all integers $j\in \{1,\dots,k-1\}$, we find using the definition of $n_j$ and (\ref{eq:delta_to_eps}) that
\begin{align*}
\frac1t(n_{j+1}\cdot (t-n_{j}) +(t-n_{j+1})\cdot 1) &\leq
\frac1t
\frac{(1/4+\delta)t}{1-n_j/t}(t-n_{j})
+1\\
&= \left(\frac{1}{4}+\delta\right)t+1 \\
& \leq \left(\frac{1}{4}+\varepsilon\right)t.
\end{align*}
We now show that $n_{j+1}>n_j$ if $n_j<t$, which implies that the procedure terminates.
We need to be slightly careful in our analysis due to the floor in the definition of $n_{j+1}$.
Since $\frac{\delta t}{2}>1$, the interval 
from $(\frac14+\frac{\delta}2)\frac{t}{1-x}$ to $(\frac14+\delta)\frac{t}{1-x}$ has width greater than $1$ for all $x<1$, and so for $n_j<t$ we find
\[
\frac{n_{j+1}}{t}\geq \frac{1/4+\delta/2}{1-n_{j}/t}.
\]
We use the fact that $\frac{1/4+0}{1-x} \geq x$ for $x<1$ (which can be shown by proving that $x(1-x)$ has a unique maximum of $\frac{1}{4}$ at $x = \frac{1}{2}$). For $n_j<t$, this fact implies that
\[
\frac{1/4+\delta/2}{1-n_{j}/t}\geq \frac{n_j}{t}+\frac{\delta/2}{1-n_{j}/t}>\frac{n_j}{t}.
\]
Combining the two displayed inequalities above shows that $n_{j+1}>n_j$ if $n_j<t$. This implies that there exists a $j$ for which $n_j\geq \frac34t$, as desired.
\end{proof}
\begin{remark}
\label{R:reviewercomment}
One could instead take $n_j = j$ for each of $j=1,\dotsc,t$. Then $n_j>n_{j-1}$ and inequality $(\ref{eq:averagedegree})$ holds by the following sequence of inequalities for $t>4/\epsilon$:
\begin{align*}
    \frac1t\left((j+1)(t-j) + (t-j-1)\right) \leq \frac1t \left(j(t-j) + 2t\right) \leq \frac14t + 2\leq \left(\frac14+\epsilon\right)t.
\end{align*}
This gives a simplified proof of Lemma~\ref{lem:n}, however we include the above argument since this generalises more easily to the maximum degree setting.
\end{remark}

\begin{proof}[Proof of Theorem \ref{t:main}]
    Let $\varepsilon>0$. Let $t_0\in \mathbb{N}$ be given from Lemma \ref{lem:n}. For every $t\geq t_0$, we obtain from the lemma a sequence of integers $0=n_1\leq \dots\leq n_k=t$ such that (\ref{eq:averagedegree}) holds for all $j\in \{1,\dots,k-1\}$. We use this sequence of integers in the construction of Section \ref{subsec:forests_constr} and claim that the graph $F_k$ with its vertex partition satisfies the requirements. We already remarked that it is a forest and that it does not admit an independent transversal, so it suffices to verify that each block has average degree at most $(\frac14+\varepsilon)t$. For $j\in \{1,\dots,k-1\}$, a block of grade $j+1$ consists of $n_{j+1}$ heavy vertices of degree $t-n_{j}$ and $t-n_{j+1}$ light vertices of degree $1$. Therefore, the average degree of these blocks is at most $(\frac14+\varepsilon)t$ by (\ref{eq:averagedegree}). A block of grade 1 consists of $t$ vertices of degree $1\leq (\frac14+\varepsilon)t$ (for $t\geq 4$), so indeed all blocks have average degree at most $(\frac14+\varepsilon)t$ for $t$ sufficiently large.
\end{proof}

The termination of the procedure in the proof of Lemma \ref{lem:n} can be predicted by studying the behaviour of the M\"{o}bius transformation \mbox{$f:z\mapsto \frac{\alpha}{1-z}$} for $\alpha\in [\frac14,1)$. For $\alpha\in (\frac14,1)$ the transformation is conjugate to a rotation $z\mapsto kz$ for some $k\in \mathbb{C}$ with $|k|=1$ and $k\neq 1$ (see \cite[Theorem 4.3.4]{Beardon} for more information). 
A M\"obius transformation gets determined by the image of any three of its points and maps generalised circles to generalised circles. Since $0,1,\infty$ get mapped to $\alpha,\infty,0$ respectively, the orbit under iteration of $f$ of any element on the real line must contain some element of $[1,\infty)$. 
The behaviour of the dynamical system determined by $f$ changes dramatically at $\alpha=\frac14$: here it has an attracting fixed point at $\frac12$ in this case, and this is the reason that for such $\alpha$ we cannot find a sequence $(n_{j})_{j=0}^k$ satisfying the assumptions of Lemma \ref{lem:n}. This aligns nicely with the fact that when the average degree is at most $\frac14t$, an independent transversal must always exist.
 
\section{Bounding the maximum degree}
\label{sec:max-degree}
In our low block average degree forest construction in Section \ref{subsec:forests_constr}, the grade 2 blocks contained vertices joined entirely to a block, so these vertices have degree $t$. Is this the only obstruction to finding an independent transversal, or can we find an example of a construction where the maximum degree of a vertex is strictly smaller? In this section, we resolve this question by presenting a slightly modified version of our earlier construction. We will only highlight the modifications.
    
    \begin{theorem}
    \label{thm:maxdegree}
        For all $\eps>0$ and $t$ sufficiently large, there is a graph with a $t$-thick vertex partition, maximum block average degree at most $(\frac14 + \eps)t$ and maximum degree at most $0.854t$.
    \end{theorem}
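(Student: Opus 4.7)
The key obstacle in the construction of Section~\ref{subsec:forests_constr} is that every heavy vertex of grade $2$ is joined to the entire light set of a grade-$1$ block, giving it degree exactly $t$. This happens because we set $n_1 = 0$, so grade-$1$ blocks contain only light vertices. My plan is to modify the construction so that $n_1 > 0$, specifically with $n_1 \approx (1-\alpha)t$ for $\alpha \approx 0.854$. With this change, each grade-$2$ heavy vertex is joined to only $t - n_1 \approx \alpha t$ lights of a grade-$1$ block, and the same bound carries over to grade-$(j+1)$ heavy vertices for $j \geq 2$, since the sequence $(n_j)$ is non-decreasing.

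The price is that the $(1-\alpha)t$ new grade-$1$ heavy vertices in each grade-$1$ block must themselves be forbidden, so that the remaining $\alpha t$ lights still form a forced set. My plan is to introduce an auxiliary gadget --- a small collection of additional blocks and edges attached to each grade-$1$ block --- that forbids these new heavies through a cross-forbidding pattern (for instance a cyclic or bipartite arrangement in the spirit of the Szab\'o--Tardos construction), carefully arranged so that no vertex of the gadget acquires degree larger than $\alpha t$ and the edge count contributed to each block's average degree is small. The grades $\geq 2$ then proceed exactly as in Section~\ref{subsec:forests_constr}, with the recursion of Lemma~\ref{lem:n} applied with initial value $n_1 = (1-\alpha)t$ rather than $0$; since the associated M\"obius transformation has no real fixed point when $\eps > 0$, the sequence still reaches $n_k = t$ in finitely many steps (as discussed after the proof of Lemma~\ref{lem:n}). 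The average-degree computation in Lemma~\ref{lem:n} goes through unchanged in the main tree, so only the new grade-$1$ structure requires fresh verification.

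The main obstacle is the design and analysis of the auxiliary forbidding gadget: one must simultaneously ensure that (i) every grade-$1$ heavy vertex is genuinely forbidden by the gadget, (ii) no vertex of the gadget has degree exceeding $\alpha t$, and (iii) every block in both the main tree and the gadget has average degree at most $(\tfrac14+\eps)t$. The three constraints pull against each other, and the specific numerical value $\alpha \approx 0.854$ should emerge as the optimum of this trade-off between the size of the forced subset provided by the gadget and the max/average-degree budgets available for it. With such a gadget in hand, the proof of Theorem~\ref{thm:maxdegree} then follows the template of the proof of Theorem~\ref{t:main}: verify the degree bounds block-by-block, and observe that the iterated forbidding argument propagates from the gadget through the grade-$1$ heavies all the way up to a grade-$k$ block consisting entirely of forbidden vertices, ruling out an independent transversal.
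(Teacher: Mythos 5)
You have correctly identified the strategic goal --- shrink the forced set inside a grade-$1$ block from the full block of size $t$ down to roughly $0.854t$, so that the grade-$2$ heavies (and, by monotonicity of $(n_j)$, all later heavies) have degree at most $0.854t$. But the entire content of the theorem is the gadget that achieves this, and your proposal leaves it as a black box: you postulate ``a cross-forbidding pattern (for instance a cyclic or bipartite arrangement in the spirit of Szab\'o--Tardos)'' that forbids $(1-\alpha)t$ vertices of each grade-$1$ block, without constructing it, and without deriving where $0.854$ comes from. That is a genuine gap, not a routine verification. Worse, the specific mechanism you envision --- making the extra grade-$1$ vertices \emph{forbidden} --- runs into a circularity. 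To forbid a vertex you must join it completely to a forced set; a forced set of size at most $\alpha t$ (needed to respect the degree cap) must itself come from a sub-block gadget, so the problem recurses downward rather than bottoming out. Szab\'o--Tardos-type arrangements do not help here: they achieve no transversal with maximum degree near $t/2$ but at the cost of block average degree around $t/2$, far above the $(\tfrac14+\eps)t$ budget.

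The paper's gadget avoids forbidding any grade-$1$ vertex at all. It pairs up two blocks $V_a,V_b$ of size $t$ and places a complete bipartite graph between subsets $A\subseteq V_a$ and $B\subseteq V_b$. Since an independent transversal cannot contain a vertex of $A$ and a vertex of $B$ simultaneously, the set $(V_a\cup V_b)\setminus(A\cup B)$, of size $2t-|A|-|B|$, is forced --- no vertex is forbidden, and the only edges spent are the $|A||B|$ bipartite edges, which cost each of the two blocks average degree about $|A||B|/t$. The constraint $|A||B|\le(\tfrac14+\eps)t^2$ together with the requirement that $\max\{|A|,|B|,\,2t-|A|-|B|\}$ be small leads, with $|A|=\alpha t$ and $|B|=t/(4\alpha)$, to minimising $\max\{\alpha,\,2-\alpha-\tfrac{1}{4\alpha}\}$, whose optimum is $\alpha=\tfrac12+\tfrac{1}{2\sqrt2}<0.854$; this is the source of the constant. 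One then sets $n_1=2t-|A|-|B|$ and runs the recursion of Lemma~\ref{lem:n} from there, exactly as you describe for the higher grades. So your outline of grades $\ge 2$ is fine, but the missing gadget is the heart of the proof, and the route you sketch for it would not close.
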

    
    \begin{proof}
        In the construction of Section \ref{subsec:forests_constr}, the vertices of maximum degree arise because the forced sets at grade 1 form the entire block. If we can replace this with a smaller forced set, while maintaining the same average degree condition, we can decrease the maximum degree.
        
        We define the new blocks of grade 1 as follows. Consider two vertex sets $V_a,V_b$ of size $t$, which form two separate parts in our vertex partition but together form a block of grade 1. We create a complete bipartite graph between $A\subseteq V_a$ and $B\subseteq V_b$. The number of edges between blocks $V_a$ and $V_b$ is $|A||B|$, and no independent transversal can contain both a vertex from $A$ and a vertex from $B$. Therefore, $V_a\cup V_b - (A\cup B)$ is a forced set of size $2t-|A|-|B|$.
        
        This defines a graph $F_1'$: a copy of $K_{|A|,|B|}$ (consisting of heavy vertices) together with $2t-|A|-|B|$ isolated (light) vertices. The associated block partition will be $A$, together with $t-|A|$ isolated vertices, and the remaining $t$ vertices. 
       In the previous construction, we will replace each copy of $F_1$ with a copy of $F_1'$. This is depicted in Figure \ref{fig:2}.
    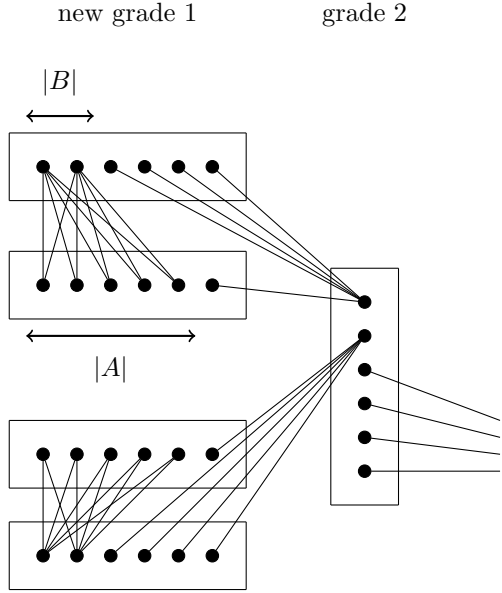
\begin{figure}
    \centering
    \begingroup
        \tikzset{every picture/.style={scale=0.9}}
        \begin{tikzpicture}
	\begin{pgfonlayer}{nodelayer}
		\node [style=dot, scale=0.5] (10) at (-7, 16) {};
		\node [style=dot, scale=0.5] (11) at (-7, 16.5) {};
		\node [style=dot, scale=0.5] (12) at (-7, 17) {};
		\node [style=dot, scale=0.5] (13) at (-7, 17.5) {};
		\node [style=dot, scale=0.5] (14) at (-7, 18) {};
		\node [style=dot, scale=0.5] (15) at (-7, 18.5) {};
		\node [style=none] (16) at (-7.5, 19) {};
		\node [style=none] (17) at (-6.5, 19) {};
		\node [style=none] (18) at (-7.5, 15.5) {};
		\node [style=none] (19) at (-6.5, 15.5) {};
		\node [style=dot, scale=0.5] (50) at (-11.75, 18.75) {};
		\node [style=dot, scale=0.5] (51) at (-11.25, 18.75) {};
		\node [style=dot, scale=0.5] (52) at (-10.75, 18.75) {};
		\node [style=dot, scale=0.5] (53) at (-10.25, 18.75) {};
		\node [style=dot, scale=0.5] (54) at (-9.75, 18.75) {};
		\node [style=dot, scale=0.5] (55) at (-9.25, 18.75) {};
		\node [style=none] (56) at (-8.75, 19.25) {};
		\node [style=none] (57) at (-8.75, 18.25) {};
		\node [style=none] (58) at (-12.25, 19.25) {};
		\node [style=none] (59) at (-12.25, 18.25) {};
		\node [style=none] (100) at (-10.5, 22.75) {new grade 1};
		\node [style=none] (101) at (-7, 22.75) {grade 2};
		\node [style=dot, scale=0.5] (102) at (-11.75, 20.5) {};
		\node [style=dot, scale=0.5] (103) at (-11.25, 20.5) {};
		\node [style=dot, scale=0.5] (104) at (-10.75, 20.5) {};
		\node [style=dot, scale=0.5] (105) at (-10.25, 20.5) {};
		\node [style=dot, scale=0.5] (106) at (-9.75, 20.5) {};
		\node [style=dot, scale=0.5] (107) at (-9.25, 20.5) {};
		\node [style=none] (108) at (-8.75, 21) {};
		\node [style=none] (109) at (-8.75, 20) {};
		\node [style=none] (110) at (-12.25, 21) {};
		\node [style=none] (111) at (-12.25, 20) {};
		\node [style=none] (112) at (-12, 18) {};
		\node [style=none] (113) at (-9.5, 18) {};
		\node [style=none] (114) at (-10.75, 17.5) {$\vert A \vert$};
		\node [style=none] (115) at (-12, 21.25) {};
		\node [style=none] (116) at (-11, 21.25) {};
		\node [style=none] (118) at (-11.5, 21.75) {$\vert B \vert$};
		\node [style=none] (119) at (-5, 16.75) {};
		\node [style=none] (126) at (-5, 16.5) {};
		\node [style=none] (127) at (-5, 16.25) {};
		\node [style=none] (128) at (-5, 16) {};
		\node [style=dot, scale=0.5] (129) at (-11.75, 16.25) {};
		\node [style=dot, scale=0.5] (130) at (-11.25, 16.25) {};
		\node [style=dot, scale=0.5] (131) at (-10.75, 16.25) {};
		\node [style=dot, scale=0.5] (132) at (-10.25, 16.25) {};
		\node [style=dot, scale=0.5] (133) at (-9.75, 16.25) {};
		\node [style=dot, scale=0.5] (134) at (-9.25, 16.25) {};
		\node [style=none] (135) at (-8.75, 16.75) {};
		\node [style=none] (136) at (-8.75, 15.75) {};
		\node [style=none] (137) at (-12.25, 16.75) {};
		\node [style=none] (138) at (-12.25, 15.75) {};
		\node [style=dot, scale=0.5] (139) at (-11.75, 14.75) {};
		\node [style=dot, scale=0.5] (140) at (-11.25, 14.75) {};
		\node [style=dot, scale=0.5] (141) at (-10.75, 14.75) {};
		\node [style=dot, scale=0.5] (142) at (-10.25, 14.75) {};
		\node [style=dot, scale=0.5] (143) at (-9.75, 14.75) {};
		\node [style=dot, scale=0.5] (144) at (-9.25, 14.75) {};
		\node [style=none] (145) at (-8.75, 15.25) {};
		\node [style=none] (146) at (-8.75, 14.25) {};
		\node [style=none] (147) at (-12.25, 15.25) {};
		\node [style=none] (148) at (-12.25, 14.25) {};
	\end{pgfonlayer}
	\begin{pgfonlayer}{edgelayer}
		\draw (16.center) to (18.center);
		\draw (18.center) to (19.center);
		\draw (19.center) to (17.center);
		\draw (17.center) to (16.center);
		\draw (56.center) to (58.center);
		\draw (58.center) to (59.center);
		\draw (59.center) to (57.center);
		\draw (57.center) to (56.center);
		\draw (15) to (55);
		\draw (108.center) to (110.center);
		\draw (110.center) to (111.center);
		\draw (111.center) to (109.center);
		\draw (109.center) to (108.center);
		\draw (107) to (15);
		\draw (106) to (15);
		\draw (105) to (15);
		\draw (104) to (15);
		\draw (102) to (50);
		\draw (102) to (51);
		\draw (102) to (52);
		\draw (102) to (53);
		\draw (102) to (54);
		\draw (103) to (50);
		\draw (103) to (51);
		\draw (103) to (52);
		\draw (103) to (53);
		\draw (103) to (54);
		\draw [style=arrowline, in=360, out=180] (113.center) to (112.center);
		\draw [style=arrowline] (115.center) to (116.center);
		\draw (13) to (119.center);
		\draw (12) to (126.center);
		\draw (11) to (127.center);
		\draw (10) to (128.center);
		\draw (135.center) to (137.center);
		\draw (137.center) to (138.center);
		\draw (138.center) to (136.center);
		\draw (136.center) to (135.center);
		\draw (145.center) to (147.center);
		\draw (147.center) to (148.center);
		\draw (148.center) to (146.center);
		\draw (146.center) to (145.center);
		\draw (139) to (129);
		\draw (139) to (130);
		\draw (139) to (131);
		\draw (139) to (132);
		\draw (139) to (133);
		\draw (140) to (129);
		\draw (140) to (130);
		\draw (140) to (131);
		\draw (140) to (132);
		\draw (140) to (133);
		\draw (14) to (134);
		\draw (14) to (144);
		\draw (143) to (14);
		\draw (14) to (142);
		\draw (141) to (14);
	\end{pgfonlayer}
\end{tikzpicture}
    \endgroup
    \caption{Part of the construction of Theorem  \ref{thm:maxdegree} is illustrated.}
    \label{fig:2}
\end{figure}

        We require $2t-|A|-|B| \leq 0.854t$ in order to satisfy the maximum degree condition for grade 2 blocks.
        
        To ensure the new grade 1 blocks do not have too high average degree, we only need
        \begin{equation}
        \label{eq:avdeg_newgrade1}
            \frac{|A||B|}{t} + 1 < \left(\frac14 + \eps\right)t.
        \end{equation}
        The maximum degree within this pair is $\max(|A|,|B|)$, so both $|A|$ and $|B|$ can be at most $0.854t$. In the following analysis we will omit floor signs; at the end, $|A|$ and $|B|$ can be rounded up (provided $t$ is sufficiently large) without violating the conditions in the statement.

        Suppose we set $|A|=\alpha t$ for some $\alpha\in (0,1)$. Equation (\ref{eq:avdeg_newgrade1}) is satisfied as long as $|B|\leq \frac{t}{4\alpha}$ (for $t$ sufficiently large). Setting $|B|=\frac{t}{4\alpha}$, the maximum degree of grade 2 blocks is 
        \[
        2t-|A|-|B|\leq \left(2-\alpha - \frac{1}{4\alpha}\right)t.
        \]
        We take $\alpha\geq \frac12$, in which case $|A|\geq |B|$. Then
\[
\max\{|A|,|B|,2t-|A|-|B|\}= \max\left\{\alpha, 2-\alpha-\frac1{4\alpha}\right\}.
\]
       
We minimise  $\max\{\alpha,2-\alpha-\frac1{4\alpha}\}$ by setting the two functions to be equal, which yields $\alpha=\frac12+\frac1{2\sqrt{2}}<0.854$. This value now also upper bounds the maximum degree of grade 1 and grade 2 blocks. We set $n_1=2t-|A|-|B|<0.854t$, and follow the construction from Lemma \ref{lem:n} otherwise. This will set $n_2 \geq 0.25t$, and $n_j\geq n_2$ for $j\geq 2$, and so the vertices in blocks of grade at least $3$ all have maximum degree at most $0.75t$.
\end{proof}

    Loh and Sudakov \cite{LS07} introduced the notion of \emph{local degree} for a graph $G$ with a vertex partition $\PP$ as the maximum number of edges from a vertex to some block. Instead of maximum degree we can look at local degree and get a slightly better upper bound in our construction. \begin{theorem}
    \label{thm:maxblockdegree}
        For all $\eps>0$ and $t$ sufficiently large, there is a graph with a $t$-thick partition $\PP$ of $V(G)$, maximum block average degree at most $(\frac14 + \eps)t$ and local degree at most $0.731t$.
    \end{theorem}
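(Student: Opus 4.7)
The plan is to mirror the proof of Theorem~\ref{thm:maxdegree}, re-optimising the parameters $|A|$ and $|B|$ of the modified grade-$1$ unit to minimise the \emph{local} degree rather than the maximum degree. The architecture is unchanged: each grade-$1$ unit is a pair of blocks $V_a, V_b$ of size $t$ joined by a complete bipartite graph between $A \subseteq V_a$ and $B \subseteq V_b$; a private copy is attached to each grade-$2$ heavy vertex; and from grade $2$ onwards the recursion of Lemma~\ref{lem:n} is run, initialised from the two-block forced set.

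Write $\alpha = |A|/t$ and $\beta = |B|/t$, and impose $\alpha\beta = \tfrac14$ so that the grade-$1$ block average degree is $(\tfrac14 + o(1))t$; assume $\alpha \geq 1/2$, so $\beta = 1/(4\alpha) \leq \alpha$. The local degrees in the finished construction split into five families: a vertex of $A$ contributes $\beta t$ (to $V_b$); a vertex of $B$ contributes $\alpha t$ (to $V_a$); the grade-$2$ heavy vertex contributes $(1-\alpha)t$ and $(1-\beta)t$ to its two grade-$1$ neighbours; and for $j \geq 2$ a grade-$(j+1)$ heavy vertex contributes $u_j t := (t - n_j)$ edges to its grade-$j$ neighbour block, where $(n_j)$ is the Lemma~\ref{lem:n} sequence initialised by $u_1 = 2 - \alpha - 1/(4\alpha)$. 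By AM--GM, $\alpha + 1/(4\alpha) \geq 1$, so $1 - \beta \leq \alpha$, whence the grade-$1$-or-$2$ contribution is at most $\alpha t$. Since the M\"obius map $f(u) = 1 - (\tfrac14 + \eps)/u$ has no real fixed point when $\eps > 0$ (recall the remark at the end of Section~\ref{sec:forests_simple}), one has $f(u) < u$ for all $u > 0$, and hence $(u_j)_{j \geq 2}$ strictly decreases until termination; so the largest contribution from grade-$\geq 3$ heavy vertices is $u_2 t = \bigl(1 - (\tfrac14+\eps)/(2 - \alpha - 1/(4\alpha))\bigr)\,t$.

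The overall local degree is therefore $\max(\alpha, u_2)\,t$. Balancing $\alpha = u_2$ with $\eps = 0$ yields the cubic $4\alpha^3 - 12\alpha^2 + 8\alpha - 1 = 0$, whose unique root in $(1/2, 1)$ is $\alpha^* \approx 0.7303$. Taking $\alpha = 0.731$ slightly above $\alpha^*$, both $\alpha \leq 0.731$ and (for $\eps$ small and $t$ large) $u_2 \leq 0.731$ hold, and the remaining local contributions $\beta t, (1-\alpha)t, (1-\beta)t$ and $u_j t$ (for $j \geq 3$) are all dominated. The main obstacle I foresee is the book-keeping: one must verify carefully that the cubic root sits safely below $0.731$ with enough slack to absorb $\eps > 0$, and that the average-degree bound $(\tfrac14+\eps)t$ at grade $1$ survives the insertion of both the bipartite edges and the edges to the grade-$2$ heavy vertex; both checks mirror the calculations in the proof of Theorem~\ref{thm:maxdegree}.
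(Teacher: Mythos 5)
Your proposal is correct and follows essentially the same route as the paper's proof sketch: reuse the Theorem~\ref{thm:maxdegree} construction with $\alpha\beta=\tfrac14$, note that the binding local-degree contributions are $\alpha t$ (from $B$ into $V_a$) and $(t-n_2)$ (from grade-$3$ heavy vertices), and take $\alpha=0.731$. The only difference is that you explicitly derive the cubic $4\alpha^3-12\alpha^2+8\alpha-1=0$ locating the optimal balance point near $0.7304$, which the paper states only implicitly via its choice of $\alpha$.
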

    \begin{proof}[Proof sketch]
        We use the same construction as in the proof of Theorem \ref{thm:maxdegree}, but we use $\alpha=0.731$ instead.
        We still find $\alpha\geq \frac1{4\alpha}$, and the maximum number of edges a vertex in a block of grade 2 has to another block is hence $(1-\frac1{4\alpha})t <0.731t$. For $t$ sufficiently large, with $n_2 =(2-\alpha-\frac1{4\alpha})^{-1}\frac{t}4$, the vertices of in a block of grade 3 have maximum degree at most $t-n_2\leq 0.731t$. For vertices in blocks of higher grade, the maximum degree will be even lower due to the fact that $n_j>n_{j-1}$ for $j\geq 3$.
    \end{proof}

\section{Extension to hypergraphs}
\label{sec:hypergraphs}
In this section, we illustrate how our results translate to the setting of $r$-uniform hypergraphs. In this setting, the edges are all of size $r$ and an independent transversal must avoid containing all the vertices in an edge. Clearly, independent transversals are only obstructed by edges which are \textit{stretched} --- that is, each vertex of the edge lies in a different part of the partition. Again, exponentially many independent transversals can be found if the block average degree is sufficiently small. 
\begin{theorem}[Wanless-Wood \cite{WW20}]
\label{t:wanlesswood}
Fix integers $r \geq 2$ and $t \geq 1$. Let $G$ be an $r$-uniform hypergraph, and let $\PP=\Setx{V_1,\dots,V_n}$ be a $t$-thick partition of $V(G)$. Suppose that, for each $i \in \{1,\dots, n\}$, at most $\frac{(r-1)^{r-1}}{r^r}t^{r-1}|V_i|$ stretched edges in $G$ intersect $V_i$. Then there exist at least $\left(\frac{r-1}r\right)^nt^n$ independent transversals of $\PP$.
\end{theorem}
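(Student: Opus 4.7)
The plan is to adapt the Rosenfeld--Wanless--Wood counting scheme that underlies Theorem~\ref{t:ww-graphs} to the $r$-uniform setting. Write $\gamma = (r-1)/r$ and let $I(G,\PP)$ denote the number of independent transversals of $\PP$ in $G$. I would argue by induction on the number of blocks $n$, aiming for the inductive step
\[
I(G,\PP)\ \geq\ \gamma|V_n|\cdot I(G',\PP'),
\]
where $(G',\PP')$ is the restriction of the hypergraph and partition to $V_1,\ldots,V_{n-1}$. Iterating this then yields $I(G,\PP) \geq \prod_{i=1}^n \gamma |V_i| \geq (\gamma t)^n$, as required. The base case $n=0$ (or $n=1$, no stretched edges) is immediate.

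The starting point for the inductive step is the identity $I(G,\PP) = \sum_{v\in V_n} I^v$, where $I^v$ counts independent transversals through $v$. Given an independent transversal $T$ of $\PP'$ and $v\in V_n$, the set $T\cup\{v\}$ fails to be independent precisely when some stretched edge $e$ with $e\cap V_n = \{v\}$ satisfies $e\setminus\{v\} \subseteq T$. Hence
\[
I(G,\PP)\ \geq\ |V_n|\cdot I(G',\PP')\ -\ \sum_e \tau_e,
\]
where the sum runs over stretched edges $e$ meeting $V_n$, and $\tau_e$ denotes the number of independent transversals of $\PP'$ that contain the $(r-1)$-set $e\setminus V_n$. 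If one establishes the pointwise upper bound $\tau_e \leq I(G',\PP')/(\gamma t)^{r-1}$, then the hypothesis that at most $\frac{(r-1)^{r-1}}{r^r}t^{r-1}|V_n|$ stretched edges meet $V_n$, together with the arithmetic identity $\frac{(r-1)^{r-1}}{r^r}\cdot \gamma^{-(r-1)} = \frac{1}{r}$, gives $\sum_e \tau_e \leq \frac{|V_n|}{r} I(G',\PP')$, and therefore $I(G,\PP) \geq (1-\tfrac1r)|V_n|\cdot I(G',\PP') = \gamma|V_n|\cdot I(G',\PP')$, which closes the induction.

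The main obstacle is precisely the upper bound on $\tau_e$, which a naive induction does not provide since it delivers only lower bounds on independent-transversal counts. One remedy is to strengthen the inductive hypothesis: I would also track, for every partial assignment $S$ picking one vertex in each of some $k$ distinct blocks, the number of independent transversals extending $S$, and prove the stronger statement that every such extension count is at least $(\gamma t)^{n-k}$ times a reference quantity. The bound on $\tau_e$ should then follow either by averaging over the admissible $(r-1)$-sets or by a recursive argument that absorbs the $r-1$ fixed positions of $e\setminus V_n$ one at a time, reapplying the stretched-edge density hypothesis at each step. The constant $\frac{(r-1)^{r-1}}{r^r}$ is exactly what makes this arithmetic close: it is the maximum of $x(1-x)^{r-1}$ on $[0,1]$, attained at $x=1/r$, specialising to $\frac14$ when $r=2$ and thereby recovering Theorem~\ref{t:ww-graphs}.
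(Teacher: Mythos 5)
First, a point of context: the paper does not prove this statement at all --- Theorem~\ref{t:wanlesswood} is quoted verbatim from Wanless and Wood~\cite{WW20} (it is their Theorem~9 in its hypergraph form), so there is no in-paper proof to compare yours against. Judged on its own terms, your outline is the standard Rosenfeld--Wanless--Wood counting argument, and the skeleton is right: the inductive ratio $I(G,\PP)\geq \gamma|V_n|\,I(G',\PP')$, the decomposition of the error term over stretched edges meeting $V_n$, the target bound $\tau_e\leq I(G',\PP')/(\gamma t)^{r-1}$, and the arithmetic $\frac{(r-1)^{r-1}}{r^r}\gamma^{-(r-1)}=\frac1r$ are all exactly what makes the proof close. (One small structural caveat: the inductive statement must be quantified over \emph{every} block of \emph{every} admissible instance, not just over deleting the last block $V_n$, since you will need to peel off the specific blocks met by $e\setminus V_n$.)

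The gap is in how you resolve the crux, the bound on $\tau_e$. Your first proposed remedy --- lower-bounding, for every partial assignment $S$, the number of transversals extending $S$ --- cannot work as stated: such extension counts can be zero (e.g.\ if $S$ already contains an edge, or contains a forbidden vertex), and in any case a lower bound on extension counts is the wrong direction for controlling $\tau_e$. The correct mechanism, which your second remedy gestures at but overcomplicates, is simpler: if $e\setminus V_n$ meets blocks $V_{i_1},\dots,V_{i_{r-1}}$ and $\PP''$ denotes $\PP'$ with those blocks removed, then restriction gives an injection from the transversals counted by $\tau_e$ into the independent transversals of $\PP''$, so $\tau_e\leq I(G'',\PP'')$ \emph{trivially}; and applying the induction hypothesis $r-1$ times (deleting blocks only shrinks the set of stretched edges, so the density hypothesis is inherited for free --- no ``reapplication of the stretched-edge density hypothesis'' is needed) yields $I(G',\PP')\geq (\gamma t)^{r-1}I(G'',\PP'')$. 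Chaining these two gives $\tau_e\leq I(G',\PP')/(\gamma t)^{r-1}$ and your computation then finishes the proof. So the missing step is genuinely easy, but as written your proposal does not supply it and the remedy you lead with would fail.
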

We show the result above is tight by extending the construction of Theorem \ref{t:main}.
\begin{theorem}
\label{t:hypergraphs}
Fix $r \geq 2$ and $\eps>0$. There is a function $C(\eps,r)$ such that for all sufficiently large $t$, and any $n > t^{C(\eps,r)}$ there exists an $r$-uniform hypergraph $G$ with a $t$-thick partition $\PP=\Setx{V_1,\dots, V_n}$ of $V(G)$ such that for each $i \in \{1,\dots, n\}$, at most $\left(\frac{(r-1)^{r-1}}{r^r}+\eps\right)t^r$ (stretched) edges in $G$ intersect $V_i$, but there is no independent transversal of $\PP$.
\end{theorem}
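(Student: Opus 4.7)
The plan is to lift the graph construction of Section~\ref{subsec:forests_simple} to the $r$-uniform setting by replacing each ``join to a forced set'' gadget with the natural hyperedge construction. Inductively define $F_1,\dots,F_k$ graded as before: grade-$(j+1)$ blocks consist of $n_{j+1}$ heavy plus $t-n_{j+1}$ light vertices, and to each heavy $v$ at grade $j+1$ one attaches $r-1$ private disjoint copies $C_1,\dots,C_{r-1}$ of $F_j$, adding every hyperedge $\{v,u_1,\dots,u_{r-1}\}$ with $u_i$ ranging over the $t-n_j$ light vertices in the grade-$j$ top block of $C_i$. These edges hit $r$ distinct blocks, so they are stretched. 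The forbidden/forced induction of Section~\ref{sec:forests_simple} carries over verbatim: if every heavy vertex of grade $j$ is forbidden, the light vertices of each grade-$j$ block form a forced set, so any transversal containing a heavy $v$ of grade $j+1$ must pick one light vertex $u_i$ from each of its $r-1$ grade-$j$ blocks, producing the hyperedge $\{v,u_1,\dots,u_{r-1}\}$ inside the transversal. Taking $n_k=t$ forbids every vertex of the grade-$k$ block, so no independent transversal exists.

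The edge-counting is the natural analogue. A grade-$j$ block $V_i$ is intersected by $n_j(t-n_{j-1})^{r-1}$ hyperedges whose ``top'' is a heavy vertex of $V_i$, and (if $j<k$) by $(t-n_j)^{r-1}$ further hyperedges, namely all edges of the unique grade-$(j+1)$ heavy vertex that took $V_i$ as one of its $r-1$ private forced sets. Writing $x_j=n_j/t$ and $C_r=(r-1)^{r-1}/r^r$, the required bound per block becomes
\[
  x_j(1-x_{j-1})^{r-1}t^r+(1-x_j)^{r-1}t^{r-1}\ \leq\ (C_r+\eps)t^r,
\]
and since the second summand is at most $t^{r-1}=o(t^r)$, the essential inequality reduces to $x_j(1-x_{j-1})^{r-1}\leq C_r+\eps/2$. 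The grade-$1$ block ($n_1=0$) and the grade-$k$ block (no ``bottom'' contribution) are immediate special cases.

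The hypergraph analogue of Lemma~\ref{lem:n} is to produce integers $0=n_1<n_2<\dots<n_k=t$ satisfying this constraint, which I would do greedily by setting $n_{j+1}=\min\bigl(t,\lfloor(C_r+\eps/2)t/(1-x_j)^{r-1}\rfloor\bigr)$. The key dynamical fact, and the reason Wanless--Wood's bound is tight in the first place, is that $f(x)=x(1-x)^{r-1}$ attains its global maximum $C_r$ exactly at $x=1/r$, so the unperturbed iteration has a neutral fixed point there. With the $\eps/2$ slack one obtains a uniform drift
\[
  x_{j+1}-x_j\ \geq\ \frac{\eps/2}{(1-x_j)^{r-1}}-\frac{1}{t}\ \geq\ \frac{\eps}{3}
\]
for $t$ large and $x_j\in[0,1)$, so the sequence reaches $1$ in $k=O_r(1/\eps)$ steps. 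Verifying this uniform positive drift through the slow region near $x=1/r$ is the main delicate step, exactly as in Lemma~\ref{lem:n}.

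Finally, an easy induction gives $B_{j+1}=1+n_{j+1}(r-1)B_j$ for the total number of blocks in $F_{j+1}$, so $B_k\leq(t(r-1))^{k-1}\leq t^{C(\eps,r)}$ for a suitable function $C(\eps,r)$ depending only on $k$ and $r$. Given any $n>t^{C(\eps,r)}$, one pads the partition with $n-B_k$ extra isolated blocks of size $t$: these add no hyperedges, preserve the edge count on every original block, and leave the obstruction in the grade-$k$ block intact.
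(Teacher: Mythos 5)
Your proposal is correct and follows essentially the same construction and analysis as the paper: the same recursive gadget (each heavy vertex of grade $j+1$ joined completely to the light parts of $r-1$ private grade-$j$ blocks), the same per-block edge count $n_j(t-n_{j-1})^{r-1}+(t-n_j)^{r-1}$, the same reliance on $\max_x x(1-x)^{r-1}=\frac{(r-1)^{r-1}}{r^r}$, and the same padding with empty blocks. The only (harmless) difference is in the termination analysis: you establish a uniform additive drift $x_{j+1}-x_j\geq \eps/3$, whereas the paper derives multiplicative growth $n_{j+1}\geq(1+\delta/2)n_j$ from $n_2/t\geq\eps/2$; both yield $k=O_{\eps,r}(1)$ grades.
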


\begin{proof}
    We mimic the proof of Theorem \ref{t:main}. Let $\eps\in (0,1)$ and $r\geq 2$ be given, reducing $\eps$ if necessary so that $\eps < \frac{(r-1)^{r-1}}{2r^r}$. Let $\frac{2\eps}{3}<\delta<\eps$, and let $t$ be sufficiently large that
  
    \begin{equation}
    \label{eq:hypergraphstlarge}
    t^{r-1}+ \left(1+\delta\right)\frac{(r-1)^{r-1}}{r^r}t^r \leq (1+\eps) \frac{(r-1)^{r-1}}{r^r}t^r.
    \end{equation}
    Blocks of grade 1 will consist of $t$ vertices, all of which will have degree at most $t^{r-2}$, and hence the block average degree is sufficiently low by  (\ref{eq:hypergraphstlarge}).
    
    
Blocks of grade 2 consist of 
       \[
        n_2= \left\lfloor (1+\delta) \frac{(r-1)^{r-1}}{r^r} t\right \rfloor \geq \frac{\eps t}{2}
        \] 
        heavy vertices. Each such vertex $v$ has a private $(r-1)$-tuple of blocks $B_1,\dots,B_{r-1}$ and incident edges       \[
        \{\{v,b_1,\dots,b_{r-1}\}\mid b_1\in B_1,\dots,b_{r-1}\in B_{r-1}\}.
        \]
        Each block of grade 2 will be part of an $(r-1)$-tuple of blocks of grade $2$, and the remaining $t-n_2$ vertices will each be incident with at most $(t-n_2)^{r-2}$ edges. The number of edges incident to a block of grade 2 is
        \[
        t^{r-1} n_2 +(t-n_2)^{r-1} \leq (1+\eps) \frac{(r-1)^{r-1}}{r^r}t^r
        \] 
        by (\ref{eq:hypergraphstlarge}). Having defined blocks of grade $j$ for some $j\geq 2$, the blocks of grade $j+1$ are defined as follows. Once $(t-n_j)^{r-1}\leq \frac{(r-1)^{r-1}}{r^r}t^{r-1}$, we create a single block of grade $j+1$, with each vertex joined completely to a private $(r-1)$-tuple of blocks of grade $j$, and we terminate the construction. Otherwise, we set $n_{j+1}$ to be the minimum of 
        \begin{equation}
        \label{eq:njplus1hypergraph}
        \left\lfloor (1+\delta) \frac{(r-1)^{r-1}}{r^r} \left(\frac{t}{t-n_{j}}\right)^{r-1} t \right \rfloor
        \end{equation}
        and $t$.
        The edges will be defined as in the case $j = 2$, and the blocks will form $(r-1)$-tuples.
        
        

        By (\ref{eq:hypergraphstlarge}), the number of edges incident to a block of grade $j+1$ is
        \[
        n_{j+1}(t-n_j)^{r-1}+(t-n_{j+1})^{r-2}\leq (1+\eps) \frac{(r-1)^{r-1}}{r^r}t^r.
        \]
       Again, it suffices to show that the construction terminates, for which it suffices to show that $n_j<n_{j+1}$ when $n_j,n_{j+1}<t$. For $t$ sufficiently large,
        \begin{align*}
           \frac{n_{j+1}}{t} &= \left\lfloor (1+\delta) \frac{(r-1)^{r-1}}{r^r} \left(\frac{t}{t-n_{j}}\right)^{r-1} t \right \rfloor t^{-1}\\
           &\geq (1+\frac{\delta}{2}) \frac{(r-1)^{r-1}}{r^r} \left(\frac{1}{1-n_{j}/t}\right)^{r-1} \\
           &\geq (1+\frac{\delta}{2})\frac{n_{j}}{t},
        \end{align*}
        using that $(1-x)^{r-1}x\leq \frac{(r-1)^{r-1}}{r^r}$ for all $x\in (0,1)$.
        
        Observe that $\frac{n_2}{t} > \frac{\eps}2$, and for $i\geq 3, \frac{n_i}{n_{i-1}} \geq (1+\frac{\eps}3)$ (or $n_i = t$). As such, for some $j\leq \log_{1+\eps/3}(\frac2{\eps} + 2)$, $n_j = t$, and the construction terminates at that grade.
        
       Each block of grade $j$ is joined to $(r-1)n_j$ (private) blocks of grade $j-1$. Each of these blocks in turn has private blocks of grade $j-2$. Overall, if $j$ is such that $n_j = t$, we require at most $((r-1)t)^{j+1}$ blocks. We can obtain constructions for any larger $n$ by adding additional blocks with no incident edges. 
        
\end{proof}
We also offer the following analogue of Theorem \ref{thm:maxdegree}.

\begin{theorem}
In the construction of Theorem \ref{t:hypergraphs}, we can also require \begin{align*}\Delta(G) \leq \left(1-\frac{(r-1)^{r-1}}{3r^r}\right)t^{r-1}\end{align*}
\end{theorem}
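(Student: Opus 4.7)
The plan is to mirror the modification of Theorem~\ref{thm:maxdegree}, adapted to the hypergraph setting. For $r=2$ the bound follows directly from Theorem~\ref{thm:maxdegree} (which gives the stronger constant $0.854$), so I focus on $r\geq 3$. The idea is to reduce the bottleneck degree $t^{r-1}$ of each grade-$2$ heavy vertex $v$ in the construction of Theorem~\ref{t:hypergraphs} by trading some of its hyperedges for compensating hyperedges routed through a new auxiliary grade-$1$ block.

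Concretely, for each grade-$2$ heavy vertex $v$ with private $(r-1)$-tuple $B_1,\ldots,B_{r-1}$ of grade-$1$ blocks, I would introduce an additional grade-$1$ block $U$ of size $t$. Writing $\gamma=\frac{(r-1)^{r-1}}{r^r}$ and $c=(\gamma/3)^{1/(r-1)}$, I pick subsets $C_i\subseteq B_i$ of size $\lceil ct\rceil$ and set $T=C_1\times\cdots\times C_{r-1}$, so $|T|\geq (\gamma/3)t^{r-1}$. I then delete the hyperedges $\{v,b_1,\ldots,b_{r-1}\}$ with $(b_1,\ldots,b_{r-1})\in T$ and add, in their place, the $t$ compensating hyperedges $\{b_1,\ldots,b_{r-1},u\}$ indexed by $u\in U$.

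I would then verify three things. First, $v$ remains forbidden: any transversal containing $v$ together with choices $b_i\in B_i$ and $u\in U$ is blocked either by a surviving $v$-edge (when $(b_1,\ldots,b_{r-1})\notin T$) or by a compensating edge (when $(b_1,\ldots,b_{r-1})\in T$). Second, every degree is at most $(1-\gamma/3)t^{r-1}$: vertex $v$ has degree $t^{r-1}-|T|\leq (1-\gamma/3)t^{r-1}$; each $u\in U$ has degree $|T|\leq (\gamma/3)t^{r-1}$; each $b\in B_i$ has degree at most $(1-\gamma/3)t^{r-2}+c^{r-2}t^{r-1}$, whose dominant term $(\gamma/3)^{(r-2)/(r-1)}t^{r-1}$ stays below $(1-\gamma/3)t^{r-1}$ for $r\geq 3$; and heavy vertices at grade $j\geq 3$ retain their original degree $(t-n_{j-1})^{r-1}\leq (1-(1+\delta)\gamma)t^{r-1}\leq (1-\gamma/3)t^{r-1}$, using $n_{j-1}\geq n_2=(1+\delta)\gamma t$ and the elementary bound $(1-x)^{r-1}\leq 1-x$ for $x\in[0,1]$, $r\geq 2$. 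Third, the block-average-degree condition of Theorem~\ref{t:hypergraphs} is preserved: each $B_i$ has $t^{r-1}+|T|(t-1)\leq (\gamma/3+o(1))t^r$ incident edges, block $U$ has $|T|\cdot t\leq (\gamma/3)t^r$, and the grade-$2$ block's incidence count only decreases under the modification.

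The main subtlety I expect is the compensating contribution $(\gamma/3)^{(r-2)/(r-1)}t^{r-1}$ to the degree of a vertex $b\in B_i$ in the subcube $C_i$: one must check that it stays below $(1-\gamma/3)t^{r-1}$. This holds for $r\geq 3$ (and in fact $(\gamma/3)^{(r-2)/(r-1)}\leq (\gamma/3)^{1/2}\leq 1-\gamma/3$ since $\gamma/3\leq 1/6<(3-\sqrt{5})/2$), but at $r=2$ the exponent $(r-2)/(r-1)$ collapses to $0$ and the contribution jumps to $t$, violating the target. That failure mode is precisely why the $r=2$ case must be peeled off to Theorem~\ref{thm:maxdegree}.
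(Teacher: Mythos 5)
Your construction is correct, but it takes a genuinely different route from the paper's. Writing $c_r=\frac{(r-1)^{r-1}}{r^r}$, the paper generalises the graph-case trick of Theorem~\ref{thm:maxdegree} directly: each grade-1 block is replaced by $r$ blocks carrying a complete $r$-partite $r$-uniform hypergraph with part sizes $m_1\geq\dots\geq m_r$, so that the complement of the parts is a forced set of size $rt-\sum m_i$; one then minimises $\max\{m_1\cdots m_{r-1},\,(rt-\sum m_i)^{r-1}\}$ subject to $\prod_i m_i\leq c_rt^r$, and substituting $m_{r-1}=(1-\frac{c_r}{3})t$ yields the stated constant. You instead leave the grade-1 blocks untouched and attack the bottleneck vertex $v$ directly: you delete the subcube $T=C_1\times\cdots\times C_{r-1}$ of its incident edges and restore its forbidden status via compensating edges routed through a fresh auxiliary block $U$. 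Your verification is sound --- $v$ remains forbidden because a transversal meets either a surviving $v$-edge or a full compensating edge, the degree and block-incidence counts are as you state, and peeling off $r=2$ is genuinely necessary since the compensating contribution $c^{r-2}t^{r-1}$ to a vertex of $C_i$ degenerates to $t^{r-1}$ there. One phrasing slip: ``the $t$ compensating hyperedges indexed by $u\in U$'' should read ``the $|T|\cdot t$ hyperedges $\{b_1,\dots,b_{r-1},u\}$ for $(b_1,\dots,b_{r-1})\in T$ and $u\in U$''; your subsequent counts ($|T|$ for each $u$, $|T|\cdot t$ for the block $U$) make clear this is what you intend. A pleasant feature of your route is its slack: the binding constraints (the block budgets of $U$ and of the $B_i$, and the degree of $b\in C_i$ for $r\geq3$) would permit $|T|$ as large as roughly $c_rt^{r-1}$, giving $\Delta(G)\leq(1-c_r+o(1))t^{r-1}$, which is stronger than the stated bound; the paper's route is instead tied to the multipartite optimisation, for which the authors note $1-\frac{c_r}{3}$ is only an approximation to the exact optimum.
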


For brevity, we only provide an outline of how the above construction is modified, and do not provide a detailed proof. Let $c_r = \frac{(r-1)^{r-1}}{r^r}$.

We bounded the maximum degree for the forest construction by replacing each block of grade 1 with two blocks, and placed an unbalanced complete bipartite graph between them. The natural generalisation of this to hypergraphs is to replace each grade 1 block with $r$ blocks, and place a complete $r$-partite $r$-uniform hypergraph between them, with part sizes $m_1\geq \dotsc\geq m_r$. We cannot choose all vertices from this graph, and so the remaining vertices form a forced set, of size $rt - \sum m_i$. 

As before, the maximum vertex degree must occur in either a grade 1 or a grade 2 block --- beyond this point, since the $n_i$ are increasing, the degrees of the heavy vertices are decreasing. We will ignore all edge contributions of order less than $t^{r-1}$: this is admissible, since we can if necessary make $\eps$ smaller, and take $t$ sufficiently large. The degrees in the grade 1 block are at most $m_1m_2\dotsc m_{r-1}$ (arising from vertices in a part of minimal size), and the degrees in the grade 2 block are at most $(rt-\sum m_i)^{r-1}$. Our problem is thus to minimise the maximum of these two values, given the constraint that the product of the $m_i$ is at most $c_rt^r$.

These parameters can be optimally picked by reduction to a three variable optimisation problem. Taking two (non-minimal) $m_i$ and slightly increasing the larger, while decreasing the smaller will improve our objective function. We can therefore assume that the only values which appear are $m_r$, $t$ and possibly one block with an intermediate size.

If we have multiple $m_i$ which are equal and minimal, we cannot get a large average degree from the grade 1 blocks, and so we can increase all $m_i$ without violating a constraint. Therefore, we can assume there is only one smallest block. We now have an optimisation problem in two variables $m_r$ and $m_{r-1}$, and reduce this to one variable by letting  $m_rm_{r-1} = c_rt^{r-1}$. One can check that substituting $m_{r-1} = (1-\frac{c_r}{3})t$ gives the claimed upper bound; this gives a good approximation to the general exact solution.

\section{Conclusion}
\label{sec:concl}
Let us conclude with some remarks, and open questions for further investigation.

\subsection{Local degree} Loh and Sudakov~\cite{LS07} proved that for every $\eps>0$ there exists
$\gamma>0$ such that if $\PP$ is a $t$-thick partition of $V(G)$, the
maximum degree of $G$ is at most $(1-\eps) t$ and the local degree of $G$
is at most $\gamma t$, then $\PP$ has an independent transversal. This
was extended by Kang and Kelly~\cite{KK20} and independently by Glock
and Sudakov~\cite{GS21} to the case where the maximum degree is
replaced by the maximum block average degree.

As noted by Kang and Kelly~\cite{KK20}, this could possibly be further
strengthened by introducing a parameter they called `maximum average
colour multiplicity'. Let $G$ be a graph with a partition $\PP$ of
$V(G)$. We define $\overline\mu(G)$ as the maximum, taken over pairs
$A,B$ of blocks of $\PP$, of the average number of neighbours in $B$
of a vertex $v\in A$. Kang and Kelly propose a problem which we
formulate as follows.

\begin{problem}[\cite{KK20}]
  Is it true that for every $\eps>0$ there exists $\gamma>0$ such that
  if $t$ is sufficiently large, $\PP$ is a $t$-thick partition of
  $V(G)$, the maximum block average degree of $G$ is at most $(1-\eps) t$ and
  $\overline\mu(G) \leq \gamma t$, then $\PP$ has an independent
  transversal?
\end{problem}

An affirmative answer would imply a conjecture of Loh and
Sudakov~\cite{LS07} where the maximum block average degree is replaced by
the maximum degree and the condition $\overline\mu(G) \leq \gamma t$
is replaced by the number of edges between any two blocks of $\PP$
being at most $\gamma t^2$.

\subsection{Maximum degree and maximum block average degree.} 
It is natural to ask if the tightness example provided by Theorem~\ref{t:main} can
be combined with the tightness examples for Theorem~\ref{t:haxell}
discussed in Section~\ref{sec:introduction}.

\begin{problem}
  Is it true that for every $\eps > 0$ and every sufficiently large
  $t$, there is a graph $G$ with a $t$-thick partition $\PP$ of $V(G)$
  such that $\Delta(G) \leq (\frac12+\eps)t$, the maximum block
  average degree of $G$ is at most $(\frac14+\eps)t$ and $\PP$ admits
  no independent transversal in $G$?
\end{problem}

Theorem~\ref{thm:maxdegree} provides a step in this direction, but the
maximum degree bound of $0.854t$ is a long way from
$(\frac12+\eps)t$.

More generally, what can we say about the set of pairs $(\alpha,\beta)
\in [0,1]^2$ such that for sufficiently large $t$, there is a graph
with a $t$-thick partition $\PP$, maximum degree at most $\alpha t$,
maximum block average degree at most $\beta t$ and no independent
transversal of $\PP$?

For $r$-uniform hypergraphs, we can ensure a `tight example' while each vertex meets at most $\eta t^{r-1}$ edges, for $\eta(r) \sim (1-\frac{1}{3er})$. Although we can ensure the number of edges meeting each part is $o_r(1)t^r$, we leave open whether this remains true under a `maximum degree' condition of $o_r(1)t^{r-1}$, or even $0.99t^{r-1}$.

\begin{problem}
  Is there a constant $0<\eta<1$ such that for any $r,\epsilon$ there is an $r$-uniform hypergraph $H$ and vertex partition $\PP = (V_i)$ with at most $\big( \frac{(r-1)^{r-1}}{r^r} + \epsilon\big)t^r$ stretched edges meeting each $V_i$, each vertex meeting at most $\eta t^{r-1}$ edges, and no independent transversal of $\PP$? Can we do the same for some $\eta(r)\rightarrow 0$ ?
\end{problem}

\subsection{Avoiding complete bipartite graphs.} The examples with a bound on the maximum degree
given by Theorem~\ref{thm:maxdegree} contain complete bipartite
subgraphs of order linear in the block size. We may ask if similar
examples may be constructed if we exclude this, say by forbidding
cycles of length $4$ in the graph.

\begin{problem}
  Is there a $\delta > 0$ such that for every $\eps > 0$ and all
  sufficiently large $t$, there is a graph $G$ with a $t$-thick
  partition $\PP$ of $V(G)$ such that $G$ contains no $4$-cycle, the
  maximum degree of $G$ is at most $(1-\delta)t$, the maximum block average
  degree of $G$ is at most $(\frac14 + \eps)t$, and $\PP$ has no
  independent transversal in $G$?
\end{problem}

\subsection{Number of blocks.}
The number of blocks of the vertex partitions of the graphs provided by our proof of Theorem~\ref{t:main} is large with respect to the block size. If we fix the number of blocks and let the block size vary, how will it affect the existence of independent transversals in relation to maximum block average degree?

We remark that the corresponding question for the maximum degree variant of the problem is well understood. Let $\Delta(n,t)$ be the largest integer such that for every graph $G$ with maximum degree $\Delta(n,t)$, every $t$-thick partition of $V(G)$ with $n$ blocks has an independent transversal. It follows from the results of~\cite{A03,HS06,ST06} that
\begin{equation*}
    \Delta(n,t) = \left\lceil\frac{nt}{2(n-1)}\right\rceil.
\end{equation*}

\subsection{Unequal part sizes}
We finish with a remark. One might be tempted to modify the average degree
assumption in Theorem~\ref{t:ww-graphs} as follows: If $G$ is a graph
and $\PP$ a partition of $V(G)$ such that each block $B$ is
incident with at most $\size{B}^2/4$ edges, then there is an
independent transversal of $\PP$ in $G$. This fails, however --- even
if the constant $1/4$ is replaced with an arbitrary $\beta > 0$. To
see this, take a large integer $k$ and let $G$ be the disjoint union
of stars $S_1,\dots,S_{k^2}$ of $k+1$ vertices each. Let the set $V_0$
consist of the centers of the stars and let $V_i$ ($1\leq i\leq k^2$)
consist of the leaves of $S_i$. Then the partition
$\Setx{V_0,\dots,V_{k^2}}$ has no independent transversal in $G$,
since for any choice of a vertex $v\in V_0$ there is a block
consisting of neighbours of $v$. At the same time, any block $B$ is
incident with at most $\size{B}^2/k$ edges.

\subsection*{Acknowledgements}
This work was started at the Entropy Compression and Related Methods workshop in March 2021. We extend our thanks to all participants and organisers, and especially to Ross Kang for useful pointers to the literature and many interesting discussions. The first author would like to thank Han Peters for explaining the background related to M\"obius transformations. We would like to thank the anonymous reviewers for their helpful comments on this manuscript, and in particular for bringing Remark~\ref{R:reviewercomment} to our attention.

\bibliographystyle{plain}
\bibliography{refs.bib}

\begin{thebibliography}{10}

\bibitem{AB06}
R.~Aharoni and E.~Berger.
\newblock The intersection of a matroid and a simplicial complex.
\newblock {\em Transactions of the American Mathematical Society},
  358:4895--4917, 2006.

\bibitem{ABZ07}
R.~Aharoni, E.~Berger, and R.~Ziv.
\newblock Independent systems of representatives in weighted graphs.
\newblock {\em Combinatorica}, 27(3):253--267, 2007.

\bibitem{AH00}
R.~Aharoni and P.~E. Haxell.
\newblock Hall's theorem for hypergraphs.
\newblock {\em Journal of Graph Theory}, 35(2):83--88, 2000.

\bibitem{A88}
N.~Alon.
\newblock The linear arboricity of graphs.
\newblock {\em Israel Journal of Mathematics}, 62(3):311--325, 1988.

\bibitem{A94}
N.~Alon.
\newblock Probabilistic methods in coloring and decomposition problems.
\newblock {\em Discrete Mathematics}, 127(1):31--46, 1994.

\bibitem{A03}
N.~Alon.
\newblock Problems and results in extremal combinatorics, {P}art {I}.
\newblock {\em Discrete Mathematics}, 273:31--53, 2003.

\bibitem{AS15}
N.~Alon and J.~H. Spencer.
\newblock {\em The {P}robabilistic {M}ethod}.
\newblock Wiley-Interscience, 4th edition edition, 2015.

\bibitem{BES75}
{B. Bollob\'{a}s}, {P. Erd\H{o}s}, and {E. Szemer\'{e}di}.
\newblock On complete subgraphs of $r$-chromatic graphs.
\newblock {\em Discrete Mathematics}, 13(2):97--107, 1975.

\bibitem{Beardon}
A.~F. Beardon.
\newblock {\em The Geometry of Discrete Groups}, volume~91 of {\em Graduate
  Texts in Mathematics}.
\newblock Springer-Verlag New York, second edition, 1995.

\bibitem{B17}
A.~Bernshteyn.
\newblock The {Local} {Cut} {Lemma}.
\newblock {\em European Journal of Combinatorics}, 63:95--114, 2017.

\bibitem{DEKO21}
Z.~Dvo\v{r}\'{a}k, L.~Esperet, R.~J. Kang, and K.~Ozeki.
\newblock Single-conflict colouring.
\newblock {\em Journal of Graph Theory}, 97(1):148--160, 2021.

\bibitem{E72}
P.~Erd\H{o}s.
\newblock Problem 2.
\newblock In D.~J.~A. Welsh and D.~R. Woodall, editors, {\em Combinatorics},
  pages 353--354. The Institute of Mathematics and its Applications, 1972.

\bibitem{F90}
M.~R. Fellows.
\newblock Transversals of vertex partitions in graphs.
\newblock {\em SIAM Journal on Discrete Mathematics}, 3(2):206--215, 1990.

\bibitem{GS21}
S.~Glock and B.~Sudakov.
\newblock An average degree condition for independent transversals.
\newblock {\em arXiv:2003.01683 [math.CO]}, 2020.

\bibitem{H01}
P.~E. Haxell.
\newblock A note on vertex list colouring.
\newblock {\em Combinatorics, Probability and Computing}, 10(4):345--347, 2001.

\bibitem{H08}
P.~E. Haxell.
\newblock An improved bound for the strong chromatic number.
\newblock {\em Journal of Graph Theory}, 58(2):148--158, 2008.

\bibitem{HS06}
P.~E. Haxell and T.~Szab\'{o}.
\newblock Odd independent transversals are odd.
\newblock {\em Combinatorics, Probability and Computing}, 15(1--2):193--211,
  2006.

\bibitem{J92}
G.~Jin.
\newblock Complete subgraphs of $r$-partite graphs.
\newblock {\em Combinatorics, Probability and Computing}, 1(3):241--250, 1992.

\bibitem{KK20}
R.~J. Kang and T.~Kelly.
\newblock Colourings, transversals and local sparsity.
\newblock {\em arXiv:2003.05233 [math.CO]}, 2020.

\bibitem{LS07}
P.-S. Loh and B.~Sudakov.
\newblock Independent transversals in locally sparse graphs.
\newblock {\em Journal of Combinatorial Theory, Series B}, 97(6):904--918,
  2007.

\bibitem{M01}
R.~Meshulam.
\newblock The clique complex and hypergraph matching.
\newblock {\em Combinatorica}, 21(1):89--94, 2001.

\bibitem{M03}
R.~Meshulam.
\newblock Domination numbers and homology.
\newblock {\em Journal of Combinatorial Theory, Series A}, 102(2):321--330,
  2003.

\bibitem{RW12}
B.~A. Reed and D.~R. Wood.
\newblock Polynomial treewidth forces a large grid-like-minor.
\newblock {\em European Journal of Combinatorics}, 33(3):374--379, 2012.

\bibitem{R20}
M.~Rosenfeld.
\newblock Another approach to non-repetitive colorings of graphs of bounded
  degree.
\newblock {\em The Electronic Journal of Combinatorics}, page P3.43, 2020.

\bibitem{ST06}
T.~Szab\'{o} and G.~Tardos.
\newblock Extremal problems for transversals in graphs with bounded degree.
\newblock {\em Combinatorica}, 26(3):333--351, 2006.

\bibitem{WW20}
I.~M. Wanless and D.~R. Wood.
\newblock Exponentially many hypergraph colourings.
\newblock {\em arXiv:2008.00775 [math.CO]}, 2020.

\bibitem{Y97}
R.~Yuster.
\newblock Independent transversals in $r$-partite graphs.
\newblock {\em Discrete Mathematics}, 176(1):255--261, 1997.

\end{thebibliography}

\end{document}